\theoremstyle{plain}
\newtheorem{prop}{Proposition}[section]
\newtheorem{thm}[prop]{Theorem}
\newtheorem*{thm*}{Theorem}
\newtheorem*{addendum*}{Addendum}
\newtheorem{cor}[prop]{Corollary}
\newtheorem*{convention*}{Convention}
\theoremstyle{definition}
\newtheorem*{defn*}{Definition}
\newtheorem{remark}[prop]{Remark}
\newtheorem*{scholium*}{Scholium}
\theoremstyle{remark}
\newtheorem{example}[prop]{Example}
\newtheorem*{example*}{Example}
\numberwithin{equation}{section}
\newcommand{\F}{\mathscr{F}}
\newcommand{\orient}{\mathscr{O}}
\newcommand{\sD}{\mathscr{D}}
\newcommand{\CC}{\mathbf{C}}
\newcommand{\NN}{\mathbf{N}}
\newcommand{\RR}{\mathbf{R}}
\newcommand{\ZZ}{\mathbf{Z}}
\newcommand{\goth}{\mathfrak{g}}
\newcommand{\gothh}{\mathfrak{h}}
\newcommand{\inv}{^{-1}}
\newcommand{\rank}{\mathrm{rk}}
\newcommand{\lstar}{\ell^2_\bigstar}
\newcommand{\lcirc}{\ell^2_\bigcirc}
\newcommand{\lalt}{\ell^2_{\mathrm{alt}}}
\newcommand{\se}{\subseteq}
\newcommand{\graph}{\mathscr{G}}
\newcommand{\width}{\mathrm{width}}
\newcommand{\cost}{\mathscr{C}}
\newcommand{\betti}{\beta^1} %        see if we want to get a subscript (2) or not...
\newcommand{\emf}[1]{{\bfseries\emph{#1}}}
\begin{document}
\title[Non-unitarisable representations and random forests]{Non-unitarisable representations\\ and random forests}
\author[I. Epstein]{Inessa Epstein*}
\address{I.E.: EPFL, 1015 Lausanne, Switzerland}
\curraddr{Caltech, Pasadena, CA 91125, USA}
%\email{iepstein@math.ucla.edu}
\thanks{*Supported in part by the NSF and the Clay Institute}
\author[N. Monod]{Nicolas Monod$^\dagger$}
\address{N.M.: EPFL, 1015 Lausanne, Switzerland}
%\email{nicolas.monod@epfl.ch}
\thanks{$^\ddagger$Supported in part by the Swiss National Science Foundation}
%\date{19 May, 2008}
%\subjclass{}
%\keywords{}
%
\begin{abstract}
We establish a connection between Dixmier's unitarisability problem and the expected degree of random forests on a group. As a consequence, a residually finite group is non-unitarisable if its first $L^2$-Betti number is non-zero or if it is finitely generated with non-trivial cost. Our criterion also applies to torsion groups constructed by D.~Osin, thus providing the first examples of non-unitarisable groups not containing a non-Abelian free subgroup.
\end{abstract}
\maketitle
\let\languagename\relax  % TO FIX A BUG IN RUNNING HEADERS AND BABEL

%===================================================================================================
%===================================================================================================
\section{Introduction}
If an operator $P$ is conjugated to a unitary operator, then it is uniformly bounded in the sense that $\sup_{n\in \ZZ} \|P^n\|$ is finite. The classical 1947 article by B.~Sz\H{o}kefalvi-Nagy~\cite{Sz-Nagy47} establishes the converse. A remarkable feature of Sz.-Nagy's short proof is that it uses the Banach--Mazur ``generalised limits''.

\smallskip

More generally, a representation $\pi$ of a group $G$ on a Hilbert space $V$ is called \emf{unitarisable} if there is an invertible operator $T$ of $V$ such that $T\pi(g) T\inv$ is unitary for all $g\in G$. In that case, $\pi$ is necessarily \emf{uniformly bounded} in the sense that $\sup_{g\in G} \|\pi(g)\|$ is finite%, indeed bounded by $\|T\|\cdot\|T\inv\|$
. Both J.~Dixmier~\cite{Dixmier50} and M.~Day~\cite{Day50} noticed that the very proof of Sz.-Nagy establishes that every amenable group is \emf{unitarisable}, meaning that all its uniformly bounded representations are unitarisable. Indeed, a group is \emf{amenable} by definition if it admits an invariant mean, \emph{i.e.} a generalised Banach limit.%) on the space $\ell^\infty(G)$ of bounded functions. 

\medskip

J.~Dixmier asked in~\cite[\S5]{Dixmier50} whether unitarisability characterises amenability; the present note contributes to this question. For more background, we refer to~\cite{PisierLNM}.

\medskip

A property very much opposed to amenability is the non-vanishing of the first \emf{$L^2$-Betti number} $\betti$ (to be briefly recalled below; for a detailed discussion, see~\cite{Eckmann00, Lueck}).

\begin{thm}\label{thm:betti}
Let $G$ be a residually finite group. If~$\betti(G) > 0$, then $G$ is not unitarisable.
\end{thm}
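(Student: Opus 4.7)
My plan is to construct, from the $L^2$-Betti number hypothesis together with residual finiteness, a uniformly bounded representation of $G$ that fails to be unitarisable. The geometric device bridging the hypothesis to the representation-theoretic conclusion is a $G$-invariant random spanning forest on a Schreier graph of a suitable $G$-action, which is consistent with the abstract's announcement of a link between unitarisability and expected degrees of random forests.

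The first step is to build the random forest. Residual finiteness supplies a nested chain of finite-index normal subgroups $G_n \trianglelefteq G$ with $\bigcap_n G_n = \{e\}$, hence a probability-measure-preserving $G$-action on the profinite completion $X = \varprojlim G/G_n$. A Cayley graph structure on $G$ descends to a $G$-invariant graph structure on $X$. L\"uck's approximation theorem identifies $\betti(G)$ with $\lim_n \beta_1(G_n)/[G:G_n]$, so the hypothesis forces the finite Schreier graphs $G/G_n$ to have surplus (first Betti number) growing linearly in $[G:G_n]$. A compactness argument on the space of subgraphs of $X$ then yields a $G$-invariant random spanning forest $\F$ on $X$ whose expected number of ``missing'' edges per $G$-orbit is strictly positive.

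The second step is to convert $\F$ into a uniformly bounded, non-unitarisable representation via a Pytlik--Szwarc-style cocycle construction. With an equivariant random orientation on $\F$ and $\Omega$ the probability space of such oriented forests, the natural unitary representation $\pi_0$ of $G$ on $V := L^2(\Omega) \otimes \ell^2(E)$ (where $E$ is the edge set of $X$) carries a $\pi_0$-cocycle $b \colon G \to V$ defined, fibrewise, by the oriented edge-chain from $x_0$ to $g x_0$ inside the forest. Assembling $\pi_0$ and $b$ into the upper-triangular representation
\[
\pi(g) \;=\; \begin{pmatrix} \pi_0(g) & b(g) \\ 0 & 1 \end{pmatrix}
\]
on $V \oplus \CC$ yields a uniformly bounded representation provided $\|b(g)\|$ is controlled uniformly in $g$, which should follow from the expected-degree bounds on $\F$. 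The representation $\pi$ is unitarisable if and only if $b$ is cohomologous (in a suitable Hilbert-module sense) to zero, so everything comes down to obstructing this.

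The main obstacle I foresee is precisely the verification that the cocycle $b$ is \emph{not} a bounded coboundary: this is the step where the strict positivity of $\betti(G)$, rather than the mere existence of a non-trivial forest, has to be quantitatively exploited. I would expect this to proceed via a Gaboriau-type comparison between $\betti(G)$ and the expected degree of the limiting forest, converting the positive-$\betti$ hypothesis into a spectral/metric lower bound that survives closure operations and rules out $b$ being approximated by coboundaries. This quantitative conversion — rather than the construction of the cocycle itself — is where I expect the bulk of the work to reside.
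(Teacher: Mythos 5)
Your overall strategy (random forests built from the positivity of $\betti$, converted into a representation-theoretic obstruction) is the right one, but the conversion step as you describe it does not work, and it is not the route the paper takes. Your cocycle $b(g)$, defined as the oriented edge-chain from $x_0$ to $gx_0$ inside the forest, is problematic twice over. First, it need not exist: a free spanning forest on a group with $\betti>0$ has (almost surely) infinitely many components, so $x_0$ and $gx_0$ typically lie in different trees. Second, even where it is defined, $\|b(g)\|$ grows like the tree-distance from $x_0$ to $gx_0$, hence is unbounded in $g$; the upper-triangular representation $\pi$ is then \emph{not} uniformly bounded, and no expected-degree bound will fix this. The paper's actual mechanism is different: the relevant object is not a path-cocycle but the finitely supported function $f_\mu(g)=\mu\{F:(1,g)\in F\}$, which is shown to be a Littlewood function with $\|f_\mu\|_{T^1}\leq 2$ (by orienting each tree toward a chosen root, so that every vertex has at most one outgoing edge, controlling the row sums) while $\|f_\mu\|_{\ell^2}\geq \deg(\mu)/\sqrt{\width(\mu)}$. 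Unitarisability forces the continuous inclusion $T^1(H)\se\ell^2(H)$ with a constant $K$ uniform over all subgroups $H<G$ (Bo\.zejko--Fendler), so $\deg(\mu)^2/\width(\mu)\leq 4K^2$ for every finite-width random forest on every subgroup. The uniformly bounded representations enter only inside the proof of that inclusion; one never needs to exhibit a specific non-unitarisable representation or to obstruct a specific coboundary.

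The second gap is that your obstruction is qualitative where it must be quantitative. A single forest with ``strictly positive expected surplus'' cannot contradict anything: the criterion above only fails if one produces a \emph{sequence} of forests with $\deg(\mu)^2/\width(\mu)\to\infty$. This is exactly where residual finiteness is used in the paper, and not via L\"uck approximation on the profinite completion: one first extracts a finitely generated subgroup $G_0<G$ with $\betti(G_0)>0$, then takes finite-index subgroups $H<G_0$ of arbitrarily large index, for which $\betti(H)=[G_0:H]\,\betti(G_0)$ while $\rank(H)\leq[G_0:H]\,\rank(G_0)$ (Reidemeister--Schreier). The free uniform spanning forest on a Cayley graph of $H$ with $\rank(H)$ generators has expected degree at least $2\betti(H)$ and width at most $2\rank(H)$, so the ratio $\deg^2/\width$ grows linearly in the index and violates the uniform bound. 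Your construction, which stays on one fixed graph over the profinite completion, produces only one forest with bounded width and bounded surplus per generator, and so cannot reach the conclusion by this (or any quantitative) criterion; to salvage your approach you would have to prove outright that your specific uniformly bounded representation is non-unitarisable, which is the hard part you defer and which the paper deliberately avoids.
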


%\begin{remark}\label{rem:finite_generation}
%Residual finiteness is probably not an optimal assumption; if $G$ is finitely generated, we can weaken it to admitting infinitely many finite quotients.
%\end{remark}

A similar property is that the \emf{cost} $\cost$ studied in~\cite{Gaboriau00} be larger than one. As suggested by M.~Ab\'ert, the arguments leading to the previous result have a parallel with cost.

\begin{thm}\label{thm:cost}
Let $G$ be a finitely generated residually finite group. If~$\cost(G) > 1$, then $G$ is not unitarisable.
\end{thm}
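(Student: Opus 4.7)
The plan is to derive Theorem~\ref{thm:cost} from the paper's main criterion, which links non-unitarisability of a residually finite group to the existence of $G$-equivariant random forests on the Cayley graphs of its finite quotients whose expected degree per vertex --- the ratio \ratioquantity\ --- exceeds an appropriate threshold. The task is then to translate the hypothesis $\cost(G)>1$ into the construction of such forests, in strict parallel with how $\betti(G)>0$ feeds into Theorem~\ref{thm:betti}.

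Using residual finiteness, first fix a descending chain $G = G_0 \supset G_1 \supset G_2 \supset \cdots$ of finite-index normal subgroups with $\bigcap_n G_n = \{e\}$, together with a finite generating set $S$ of $G$. The translation action on the profinite limit $\hat G = \varprojlim G/G_n$ equipped with Haar measure is free, ergodic, and probability-measure-preserving, and by Gaboriau's inequality its orbit equivalence relation $\Rel$ satisfies $\cost(\Rel) \geq \cost(G) > 1$. The Cayley graphing $\Phi_S$ generates $\Rel$ with constant per-vertex degree $|S|$, and I would extract from it a $G$-equivariant random sub-forest whose expected degree at the identity strictly exceeds~$2$; pushing this forest down via the quotient maps $\hat G \twoheadrightarrow G/G_n$ yields random forests $F_n$ in the finite Cayley graphs $\mathrm{Cay}(G/G_n, S)$ to which the criterion applies.

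The main obstacle is the extraction step, for the following reason. By Gaboriau's theory, the supremum of costs of sub-forests of $\Phi_S$ equals $\cost(\Rel)$ only when $\Rel$ is treeable, which cannot be assumed. What the criterion actually requires, however, is not a genuine treeing but merely that the expected-degree ratio on the finite quotients stay above threshold; this should be attainable by taking a maximal sub-forest of $\Phi_S$ and exploiting the fact that the excess cost $\cost(G) - 1 > 0$ forces a positive expected-degree bias at the identity. The finite-generation assumption enters crucially here, both to make $|S|$ finite --- so that the denominator in \ratioquantity\ makes sense --- and to ensure that each $\mathrm{Cay}(G/G_n, S)$ is a bounded-degree graph on which the relevant random forests are well behaved.
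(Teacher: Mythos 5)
Your proposal misidentifies the criterion that Theorem~\ref{thm:cost} is reduced to, and this derails the rest of the argument. Theorem~\ref{thm:forests} does not say that a random forest of expected degree $>2$ (on a finite quotient or anywhere else) obstructs unitarisability; it says that for a unitarisable $G$ the quantity $\deg(\mu)^2/\width(\mu)$ is \emph{uniformly bounded} over all finite-width random forests on all countable \emph{subgroups} of $G$. Two consequences. First, the forests must live on subgroups of $G$ (that is where the $T^1\hookrightarrow\ell^2$ restriction argument of Proposition~\ref{prop:inclusion} applies); random forests pushed down to the finite quotients $G/G_n$ do not feed into the criterion at all. Second, a single forest with expected degree $>2$ proves nothing: as the paper notes in Section~\ref{sec:further}, \emph{every} non-amenable finitely generated group carries such a forest, so if that were the threshold Dixmier's problem would be solved outright. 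What is needed is a \emph{family} of forests along which $\deg(\mu)^2/\width(\mu)$ is unbounded, and for a fixed generating set $S$ of $G$ this ratio is trivially bounded by $\width(\mu)\le 2|S|$.

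The actual proof therefore has two ingredients, both absent from your sketch. (1)~On any finitely generated group $H$ with symmetric generating set $S$, the free \emph{minimal} spanning forest $\mu$ satisfies $\deg(\mu)\ge 2\cost(H)$ (Proposition~\ref{prop:FMSF}); this is exactly the ``extraction step'' you flag as the main obstacle, and it is resolved not by taking a maximal sub-forest of a graphing of the profinite action (which, as you note, need not connect the orbits and carries no useful degree bound) but by the Lyons--Peres--Schramm theorem that the union of the free minimal spanning forest with Bernoulli $p$-percolation is almost surely connected for every $p>0$, whence $\cost(H)\le\tfrac12\big(\deg(\mu)+|S|\,p\big)$ for all $p$. (2)~Residual finiteness is used only to produce finite-index subgroups $H<G$ of arbitrarily large index; Gaboriau's formula $\cost(H)-1=[G:H]\,(\cost(G)-1)$ together with the Reidemeister--Schreier bound $\rank(H)\le[G:H]\,\rank(G)$ makes $\cost(H)^2/\rank(H)$, hence $\deg(\mu_H)^2/\width(\mu_H)$ for the corresponding forests, unbounded --- contradicting Theorem~\ref{thm:forests}. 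Your chain $\{G_n\}$ and the translation action on $\varprojlim G/G_n$ play no role here, and without the index-multiplicativity of $\cost-1$ your argument has no mechanism for producing the required unboundedness.
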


In fact, Dixmier first asked whether any group at all fails to be unitarisable; this was answered in 1955 when Ehrenpreis--Mautner~\cite{Ehrenpreis-Mautner} showed that the complementary series of $\mathrm{SL}_2(\RR)$ can be extended to uniformly bounded representations that are not unitarisable. A detailed treatment was given by Kunze--Stein~\cite{Kunze-Stein}.

By general properties of unitarisability, the existence of any non-unitarisable group implies that the free group $F_2$ is non-unitarisable (see~\cite{PisierLNM}). Very explicit non-unitarisable representations of $F_2$ were constructed in the eighties~\cite{Mantero-Zappa83,Pytlik-Swarc,Bozejko87}. It follows by induction of representations that any group containing $F_2$ as a subgroup is non-unitarisable.

\medskip

Until now, there was no example of non-unitarisable group not containing $F_2$. In fact, even the existence non-amenable groups without $F_2$ subgroup was a long-standing open problem in group theory, not solved until the 1980's~\cite{Olshanskii80,Adyan83}.

\bigskip

We aim to construct non-unitarisable representations under weaker assumptions than the existence of a free subgroup. A result of Gaboriau--Lyons~\cite{Gaboriau-Lyons}, notably using~\cite{Hjorth_attained} and~\cite{Pak-Smirnova-Nagnibeda}, provides an $F_2$-action on the Bernoulli percolation of any non-amenable countable group $G$ in such a way that $F_2$ can be thought of as a ``random subgroup'' of $G$, even when $G$ has no actual such subgroup. It was suggested in~\cite{MonodICM} (Problem~N) to apply an induction procedure for specific representations of random subgroups in order to answer Dixmier's question. In fact, a first use of~\cite{Gaboriau-Lyons} towards a cohomological question asked in~\cite[\S10]{Johnson} can be found in~\cite[\S5]{MonodICM} and a second use is the ergodic-theoretical result~\cite{Epstein07}.

\smallskip

We shall follow the above strategy, using the language of random forests. A \emf{forest} on a group $G$ is a subset $F\se G\times G$ such that the resulting graph $(G, F)$ has no cycles. The collection $\F_G$ of all forests on $G$ is a closed $G$-invariant subspace of the compact $G$-space of all subsets of $G\times G$ if we consider the usual product topology (\emph{i.e.} pointwise convergence) and the left diagonal $G$-action. A \emf{random forest} is a $G$-invariant Borel probability measure on $\F_G$. By $G$-invariance, the expected degree of a vertex in a random forest does not depend on the vertex; we call it the \emf{expected degree $\deg(\mu)$ of the random forest $\mu$}. Similarly, we define the \emf{width} $\width(\mu)$ as the number of vertices that neighbour a given vertex with positive probability. We shall be interested in forests with finite width. Of course, one has $\deg(\mu)\leq \width(\mu)$.

\begin{thm}\label{thm:forests}
Let $G$ be a unitarisable group. Then the quantity
$$\frac{\deg(\mu)^2}{\width(\mu)}$$
is bounded uniformly over all random forests $\mu$ (of finite width) defined on all countable subgroups of $G$.
\end{thm}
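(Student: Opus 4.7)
The plan is to associate, to each random forest $\mu$ of finite width on a countable subgroup $H\le G$, a uniformly bounded representation of $H$ whose unitarisation constant admits both an upper bound coming from unitarisability of $G$ and a lower bound involving $\deg(\mu)$ and $\width(\mu)$, producing the desired inequality $\deg(\mu)^2\lesssim \width(\mu)$. Since uniform boundedness and failure of unitarisability can be transferred from $H$ to $G$ by induction of representations, it is enough to work with a countable $H$ and deduce a contradiction from the existence of a random forest with $\deg(\mu)^2/\width(\mu)$ arbitrarily large.

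The building block is the Hilbert $H$-module $\mathcal{H}=L^2(\F_H,\mu)\otimes\ell^2(H)$ equipped with the diagonal $H$-action $\pi$, which is unitary by $H$-invariance of $\mu$. Inside $\mathcal{H}$ consider the vector $v$ whose fibre at $F\in\F_H$ is $v(F)=\sum_{s\in S}\mathbf{1}_{\{(e,s)\in F\}}\,\delta_s$, where $S\se H$ is the finite support of the width. Two estimates on $v$ lie at the heart of the argument: the pointwise bound $\|v(F)\|_{\ell^2(H)}^2\le\width(\mu)$, coming from the a.s.\ bound on the degree of $e$, and the Hilbert-space norm $\|v\|_{\mathcal{H}}^2=\deg(\mu)$ obtained by integration. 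Because $\mu$ is supported on forests, for $h\ne e$ the vectors $\pi(h)v$ and $v$ can overlap only through the at-most-one potential edge between $e$ and $h$, yielding a near-orthogonality of the translates $\pi(h_i)v$ that I expect to drive every subsequent computation.

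From $v$ I would assemble a uniformly bounded, non-unitary representation $\widetilde\pi$ of $H$ on $\mathcal{H}\oplus\mathcal{H}$ via an upper-triangular perturbation
\[
\widetilde\pi(h)\;=\;\begin{pmatrix}\pi(h) & B(h)\\ 0 & \pi(h)\end{pmatrix},
\]
where $B(h)$ is a bounded-operator cocycle built from rank-one pieces involving $v$ and $\pi(h)v$, normalised using the pointwise bound so that $\sup_h\|\widetilde\pi(h)\|$ is finite. Unitarisability of $G$ then provides, through Pisier's quantitative criterion (which passes through $H$), an intertwiner $T$ with $\|T\|\,\|T\inv\|$ bounded by a constant $K$ depending only on the unitarisation data of $G$ and on the uniform-boundedness constant of $\widetilde\pi$.

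The main obstacle, and the technical core, is to establish a matching lower bound forcing $\|T\|\,\|T\inv\|\gtrsim \deg(\mu)^2/\width(\mu)$ for any such intertwiner. The idea is to test the unitarisation identity $T\widetilde\pi(h)T\inv\in U(\mathcal{H}\oplus\mathcal{H})$ against the family $\{\pi(h_i)v\}$ with $h_i$ ranging over the support of $\mu$: the cycle-free property of the forest renders the Gram matrix $(\ip{\pi(h_i)v,\pi(h_j)v})_{i,j}$ diagonal enough that the discrepancy between the genuine Hilbert-space norm $\|v\|_{\mathcal{H}}^2=\deg(\mu)$ and the $L^\infty$-control $\|v(F)\|_{\ell^2(H)}^2\le\width(\mu)$ must be absorbed into the distortion of $T$. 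Combining this lower bound with the upper bound from unitarisability yields $\deg(\mu)^2/\width(\mu)\le K$, uniformly over $\mu$ and over countable subgroups, as required. The delicate point is to calibrate $B(h)$ so that Pisier's estimate applies on one side while the Gram-matrix computation produces precisely the factor $\deg(\mu)^2/\width(\mu)$ on the other.
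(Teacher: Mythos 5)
Your overall strategy --- turn the forest into a uniformly bounded representation by an upper-triangular (derivation) perturbation of the regular representation, then play the unitarisation constant off against $\deg(\mu)$ and $\width(\mu)$ --- is in essence the strategy behind the paper's proof, except that the paper packages it through the Littlewood space $T^1(H)$: it shows $\|f_\mu\|_{T^1(H)}\le 2$ for the edge-probability function $f_\mu(g)=\mu\{F:(1,g)\in F\}$, invokes the Bo{\.z}ejko--Fendler theorem that unitarisability forces a continuous inclusion $T^1(H)\se\ell^2(H)$ with a constant uniform over subgroups $H<G$, and concludes with the Cauchy--Schwarz estimate $\|f_\mu\|_{\ell^2}\ge\deg(\mu)/\sqrt{\width(\mu)}$. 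Your computation $\|v\|_{\mathcal H}^2=\deg(\mu)$ together with the support bound is exactly this last, easy step. The two remaining steps of your outline, however, each contain a genuine gap, and the first is precisely where the main idea of the proof lives.

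First, for the conclusion to be a bound on $\deg(\mu)^2/\width(\mu)$ that is \emph{uniform over $\mu$}, the uniform-boundedness constant of $\widetilde\pi$ must be independent of $\mu$; otherwise the constant $K$ you extract from unitarisability depends on $\mu$ and the final inequality says nothing. Your proposed normalisation ``using the pointwise bound $\|v(F)\|_{\ell^2}^2\le\width(\mu)$'' does not visibly achieve this (note also that a rank-one piece pairing $v$ against $\pi(h)v$ has operator norm $\|v\|^2=\deg(\mu)$ before any normalisation), and nowhere in your argument is acyclicity used in an essential way --- your ``near-orthogonality of translates'' only uses that the graph is simple. The combinatorial input you are missing is the orientation trick: choosing a root in each tree of each forest and orienting every edge towards that root decomposes $f_\mu=f^++f^-$ with $\sup_g\sum_{g'}|f^+(g,g')|\le 1$, because in a forest each vertex has at most one outgoing edge. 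This is what gives the universal bound $\|f_\mu\|_{T^1}\le 2$, i.e.\ a uniformly bounded representation whose constant does not depend on $\mu$. Second, the ``technical core'' you defer --- forcing $\|T\|\,\|T\inv\|\gtrsim\deg(\mu)^2/\width(\mu)$ via a Gram-matrix computation --- is not yet an argument; the actual mechanism is that unitarisability places the coefficient function in the algebra $B(H)$ of coefficients of unitary representations with controlled norm, and a cotype argument upgrades this to the $\ell^2$ bound (this is the content of \cite[2.3(i)]{Bozejko-Fendler91}, which the paper quotes rather than reproves). Without these two inputs the proposal does not close.
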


\begin{remark}
We only made the countability assumption in order to have a metrisable space of forests on which the probability is defined. This is an inessential restriction; in any case, unitarisability is a countably determined property~\cite[0.10]{Pisier_survey}. Notice also that all trees in a forest of finite width are countable.
\end{remark}

Using known estimates on specific random forests, Theorem~\ref{thm:forests} implies the following statement, wherein the \emf{rank} $\rank(H)$ denotes the minimal number of generators of a group $H$.

\begin{thm}\label{thm:betti_bis}
Let $G$ be a unitarisable group. Then the quantities
$$\frac{\big(\betti(H)\big)^2}{\rank(H)}, \kern10mm \frac{\big(\cost(H)\big)^2}{\rank(H)}$$
are bounded uniformly over all finitely generated subgroups $H$ of $G$.
\end{thm}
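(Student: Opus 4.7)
The plan is to derive Theorem~\ref{thm:betti_bis} from Theorem~\ref{thm:forests} by exhibiting, for each finitely generated subgroup $H\le G$, explicit random forests $\mu$ on $H$ whose expected degree is bounded below by $\betti(H)$ (respectively by a linear function of $\cost(H)$) and whose width is bounded above by a constant multiple of $\rank(H)$. Any such $H$ is countable, so Theorem~\ref{thm:forests} applies uniformly and supplies a constant $C$, depending only on $G$, with $\deg(\mu)^2/\width(\mu)\le C$ for every such $\mu$.

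For the $\betti$ estimate, fix a symmetric generating set $S$ of $H$ of cardinality $2\rank(H)$ and consider the resulting Cayley graph. Let $\mu$ be the free uniform spanning forest on this Cayley graph; it is an $H$-invariant random forest whose edges all lie in the Cayley graph, so $\width(\mu)\le 2\rank(H)$. A known identity relating the free and wired uniform spanning forests in a Cayley graph reads
$$\deg(\mathrm{FUSF}) - \deg(\mathrm{WUSF}) \;=\; 2\,\betti(H).$$
Since the wired expected degree is non-negative one has $\deg(\mu)\ge 2\betti(H)$, and Theorem~\ref{thm:forests} gives $2\,\betti(H)^2/\rank(H)\le\deg(\mu)^2/\width(\mu)\le C$.

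For the cost estimate, the construction is parallel but uses an almost cost-minimising graphing in place of the uniform spanning forest. By definition of $\cost$, for every $\varepsilon>0$ there is a free p.m.p.\ action $H\acts(X,\mu_X)$ and a generating graphing $\Phi$ of cost at most $\cost(H)+\varepsilon$. After decomposing the partial isomorphisms of $\Phi$ according to the free $H$-action and absorbing the residual mass into the standard generating graphing of $S$, one may assume the edges of $\Phi$ are labelled by the $2\rank(H)$ elements of $S$. Randomising the base point in $X$ then yields an $H$-invariant random subgraph of $H\times H$ of width at most $2\rank(H)$ and expected degree close to $2\cost(H)$; deleting only those edges which close cycles converts it into a random forest while reducing the expected degree by a bounded amount (essentially the excess of a spanning graphing over a spanning tree). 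This produces a random forest with expected degree comparable to $\cost(H)$ and width at most $2\rank(H)$, and Theorem~\ref{thm:forests} gives the desired bound on $\cost(H)^2/\rank(H)$.

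The main difficulty is the cost case. The FUSF is a canonical random forest and the $\betti$ estimate reduces to a single known identity, whereas for cost one must simultaneously (i) approximate a cost-minimising graphing by one whose edge labels lie in a fixed generating set of size $2\rank(H)$, and (ii) strip cycle-closing edges without losing more than a bounded fraction of the expected degree. Balancing these two constraints without blowing up either the width or the degree loss is the delicate combinatorial point.
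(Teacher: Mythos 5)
Your first bound is essentially the paper's own argument: take the free uniform spanning forest on a Cayley graph of $H$ with respect to a generating set of minimal size, bound its width by the size of that set, and quote the fact that its expected degree is $2\betti(H)+2\ge 2\betti(H)$ (the paper proves this inequality as Proposition~\ref{prop:FUSF} via Kirchhoff's theorem and the Hodge--de~Rham decomposition, rather than citing the FUSF/WUSF degree formula, but the content is the same). That half is fine.

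The cost half has a genuine gap, and in fact the approach runs in the wrong direction. First, a smaller issue: a generating graphing of cost $\le\cost(H)+\varepsilon$ decomposes into partial isomorphisms acting by \emph{arbitrary} elements of $H$, so the induced invariant random graph on $H$ has no a priori width bound in terms of $\rank(H)$; ``absorbing the residual mass into the standard generating graphing of $S$'' is not a defined operation, and rewriting an element $h\notin S$ as a word in $S$ replaces one edge by a path, changing both the cost and the graph. Second, and decisively: your graphing has expected degree at most $2\cost(H)+2\varepsilon$ to begin with, and you then \emph{delete} edges to obtain a forest, so you need the deletion to cost only $O(\varepsilon)$ of expected degree. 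But an invariant spanning forest of a connected graphing can have expected degree as low as $2$ no matter how large the degree of the ambient graph is (the wired uniform and minimal spanning forests always have expected degree exactly $2$); ``the excess of a spanning graphing over a spanning tree'' is precisely the quantity that collapses the average degree to about $2$ in the finite approximations. So ``reducing the expected degree by a bounded amount'' gives no usable lower bound, and making it give one would require showing that near-cost-minimising graphings are near-treeings, which touches the delicate cost-attainment issues and is nowhere justified. The paper avoids this entirely by arguing in the opposite direction (Proposition~\ref{prop:FMSF}): it starts from a canonical random forest, the \emph{free minimal spanning forest} on the Cayley graph of $S$ (so the width bound is automatic), adds an independent Bernoulli $p$-percolation, invokes the theorem of Lyons--Peres--Schramm that the union is almost surely connected for every $p>0$, deduces $\deg(\mu)+|S|p\ge 2\cost(H)$ from the characterisation of cost as half the infimal expected degree of connected random graphings, and lets $p\to 0$. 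You would need this (or an equivalent) external input to close the gap.
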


We asked D.~Osin whether one knows examples of groups without non-Abelian free subgroup and violating the above bound involving $\beta^1$. It turns out that D.~Osin can construct \emph{torsion} groups with this property (using among others~\cite{Peterson-Thom}); for this and more, we refer to the forthcoming~\cite{Osin09}. Thus, Osin's examples allow to deduce the following from Theorem~\ref{thm:betti_bis}.

\begin{cor}\label{cor:torsion}
There exist non-unitarisable torsion groups.
\end{cor}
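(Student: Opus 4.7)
The plan is simply to apply the contrapositive of Theorem~\ref{thm:betti_bis}. That theorem asserts that if $G$ is unitarisable, then $\betti(H)^2/\rank(H)$ must be bounded uniformly as $H$ ranges over the finitely generated subgroups of $G$. Therefore, to establish the corollary it suffices to exhibit a torsion group $G$ that contains a sequence $(H_n)$ of finitely generated subgroups along which the ratio $\betti(H_n)^2/\rank(H_n)$ fails to stay bounded; such a $G$ is automatically non-unitarisable.

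The construction of such a group is precisely the content of Osin's forthcoming work~\cite{Osin09}, which (building notably on the $L^2$-Betti number computations of~\cite{Peterson-Thom}) produces torsion groups whose finitely generated subgroups have first $L^2$-Betti number so large, relative to rank, that the uniform bound required by Theorem~\ref{thm:betti_bis} is violated. Granting this input, the corollary follows at once: the hypothesis of Theorem~\ref{thm:betti_bis} cannot hold for Osin's group, hence it is non-unitarisable; and it is a torsion group by construction.

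The principal difficulty here is not in the deduction, which is a single application of Theorem~\ref{thm:betti_bis}, but rather in the group-theoretic construction itself, which is delegated to~\cite{Osin09}. In particular, the fact that non-unitarisability can be detected through $\betti$ and $\rank$ alone, without any appeal to a free subgroup, is what makes this short argument possible, and is the key conceptual point already established by Theorems~\ref{thm:forests} and~\ref{thm:betti_bis}.
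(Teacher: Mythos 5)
Your proposal is correct and follows exactly the paper's own argument: the corollary is deduced by applying the contrapositive of Theorem~\ref{thm:betti_bis} to Osin's torsion groups from~\cite{Osin09}, which violate the uniform bound on $\betti(H)^2/\rank(H)$ over finitely generated subgroups. Nothing is missing; like the paper, you correctly delegate the group-theoretic construction to~\cite{Osin09}.
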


We shall begin by proving Theorem~\ref{thm:forests} in Section~\ref{sec:forests}. This result makes it desirable to investigate general constructions of forests with large expected degree. Indeed, Theorems~\ref{thm:betti} and~\ref{thm:cost} will be deduced by considering specific models of random forests and using known estimates for their degrees. In Section~\ref{sec:betti}, we include an expository account of the required properties of the free uniform spanning forest and reduce Theorem~\ref{thm:betti} to Theorem~\ref{thm:forests}. The reduction of Theorem~\ref{thm:cost} to Theorem~\ref{thm:forests} in Section~\ref{sec:cost} follows similar lines but using the minimal spanning forest. Strictly speaking, one could reduce Theorem~\ref{thm:betti} to Theorem~\ref{thm:cost} except for the finite generation issue discussed in Section~\ref{sec:further}; we prefer to present a more detailed account of the relation between $L ^2$-Betti numbers and forests and be more concise in Section~\ref{sec:cost}.

Section~\ref{sec:further} discusses the context and further directions of research; we point out for instance that \emph{any} non-amenable finitely generated group admits a random forest with non-trivial (\emph{i.e.}~$>2$) expected degree.

\subsection*{Acknowledgements}
It is a pleasure to thank the following colleagues: Gilles Pisier first mentioned Dixmier's problem to us; Adrian Ioana found a mistake in an earlier draft; Wolfgang L\"uck helped out with a reference. Special thanks to Mikl\'os Ab\'ert for suggesting to use the cost and to Denis Osin for providing the groups mentioned in Corollary~\ref{cor:torsion}.

%===================================================================================================
%===================================================================================================
\section{Forests and Littlewood}\label{sec:forests}
\begin{flushright}
\begin{minipage}[t]{0.5\linewidth}\itshape\small
---?`Usted sin duda querr\'a ver el jard\'in? $[\ldots]$\\
---?`El jard\'in?\\
---El jard\'in de los senderos que se bifurcan.\footnotemark
\end{minipage}
\footnotetext{J.~L.~Borges, \emph{El jard\'in de senderos que se bifurcan}
%; here is the English translation by D.~A.~Yates (\emph{The Garden of Forking Paths}).
%\begin{flushright}
%\vskip-4mm
%\begin{minipage}[t]{0.4\linewidth}\itshape\tiny
%---You no doubt wish to see the garden?$[\ldots]$\\
%---The garden?\\
%---The garden of forking paths.
%\end{minipage}
%\end{flushright}
(\emph{The Garden of Forking Paths}), 1941.%
}
\end{flushright}

\medskip

We follow Serre's conventions~\cite{Serre77} for graphs, which are thus pairs $(V,E)$ of vertex and edge sets with structural maps $E\to V, e\mapsto e_\pm$ and $E\to E, e\mapsto \bar e$. Recall that the underlying ``geometric'' edges consist of pairs of opposed edges $e,\bar e$. In the case of simple graphs, \emph{i.e.} without loops or multiple geometric edges (such as forests), one shall always consider $E$ as a subset of $V\times V$ invariant under the canonical involution and not meeting the diagonal. Recall also that an \emf{orientation} is a fundamental domain for the involution in $E$.

Given a group $G$, we define the space $\graph_G$ of all (simple) graphs on $G$ as the subset $\graph_G\se 2^{G\times G}$ of all subsets $E\se G\times G$ defining a simple graph $(G, E)$. The space $2^{G\times G}$ is compact for the product topology and has a natural $G$-action by left multiplication; since $\graph_G$ is closed and invariant, it is itself a compact $G$-space. A \emf{random graphing} of $G$ is a $G$-invariant probability measure on $\graph_G$.

We now consider the closed $G$-invariant subspace $\F_G\se\graph_G$ of forests and recall from the Introduction that a \emf{random forest} is a random graphing supported on $\F_G$. We shall not be interested in the forest of width zero. We denote by $\F_G^+$ the set of all orientations of all forests and view it as a closed $G$-invariant subspace of the compact $G$-space of subsets of $G\times G$. There is a canonical $G$-equivariant quotient map $\F_G^+\to\F_G$.

\begin{example}
Suppose that $S\se G$ is a subset freely generating a free subgroup. Then we obtain a forest $F\in\F_G$ by $F=\big\{(g, g') : g\inv g' \in S\cup S\inv\}$. This forest is $G$-fixed and hence is a (deterministic) random forest.
\end{example}

\begin{example}\label{ex:UST}
Suppose that $G$ is finite and already endowed with a graph structure $(G,E)$. The uniform measure on the set of all spanning trees of $(G,E)$ is a random forest. Aside from the notion of $G$-invariance, this random forest makes sense for any finite graph $(G,E)$ and is called the \emf{uniform spanning tree}; it will be encountered again in Section~\ref{sec:betti}.
\end{example}

Given a random forest $\mu$ on a group $G$, we denote by $f_\mu(g)$ the probability that $g\in G$ is neighbouring the identity $1\in G$. In other words, $f_\mu(g) = \mu \{F\in\F_G : (1,g)\in F\}$. If $\mu$ has finite width, then $f_\mu$ is a finitely supported function.

We now recall the definition of the $T^1$-norm on the space $\CC[G]$ of finitely supported functions and refer to~\cite{PisierLNM} for details and context. Given $f\in \CC[G]$, on considers all pairs $f^\pm$ of functions $G\times G\to \CC$ such that
$$f(g\inv g') = f^+(g,g') + f^-(g,g') \kern 5mm \forall\, g,g'\in G.$$
The norm $\|f\|_{T^1(G)}$ is the infimum of all such pairs $f^\pm$ of the expression
$$\sup_{g\in G}\sum_{g'\in G} |f^+ (g,g')| + \sup_{g\in G}\sum_{g'\in G} |f^- (g',g)|.$$
The completion of $\CC[G]$ for this norm is a Banach space denoted by $T^1(G)$ that can be realised as functions on $G$. Such functions are called \emf{Littlewood functions} (see \emph{e.g.}~\cite{Varopoulos74, Bozejko87, Bozejko-Fendler91}) in reference to classical harmonic analysis~\cite{Littlewood30}.

\begin{prop}\label{prop:estimates}
Let $\mu$ be a random forest of finite width on a countable group. Then
$$\|f_\mu\|_{T^1(G)} \leq 2 \kern5mm\text{and}\kern5mm  \|f_\mu\|_{\ell^2(G)}\geq \frac{\deg(\mu)}{\sqrt{\width(\mu)}}.$$
\end{prop}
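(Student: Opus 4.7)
The two inequalities are independent and I will prove them separately.

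For the $\ell^2$-estimate, it suffices to apply Cauchy--Schwarz to the sum $\sum_{g\in G} f_\mu(g)$. By $G$-invariance of $\mu$,
$$\sum_{g\in G} f_\mu(g) = \int_{\F_G} \#\bigl\{g\in G : (1,g)\in F\bigr\}\,\d\mu(F) = \deg(\mu),$$
while by definition the support of $f_\mu$ consists of those $g\in G$ that neighbour $1$ with positive probability, and therefore has cardinality at most $\width(\mu)$. Cauchy--Schwarz then yields $\deg(\mu)^2 \leq \width(\mu)\cdot \|f_\mu\|_{\ell^2(G)}^2$, which is the claimed lower bound.

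The $T^1$-estimate will be obtained by reading off a decomposition of $f_\mu$ from a $G$-invariant random orientation of $\mu$. Specifically, I propose to construct a $G$-invariant Borel probability measure $\nu$ on $\F_G^+$ whose push-forward to $\F_G$ is $\mu$ and which is supported on orientations in which every vertex has out-degree at most $1$. Given such $\nu$, set
$$f^+(g,g') = \nu\{F^+ : (g,g')\in F^+\}, \qquad f^-(g,g') = \nu\{F^+ : (g',g)\in F^+\}.$$
Since every edge of $F$ carries exactly one orientation in $F^+$, the $G$-invariance of $\mu$ yields the decomposition $f^+(g,g') + f^-(g,g') = f_\mu(g^{-1}g')$. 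An interchange of sum and integral shows that both $\sum_{g'} f^+(g,g')$ and $\sum_{g'} f^-(g',g)$ compute the expected out-degree at any fixed vertex under $\nu$, which is at most $1$ by construction; hence $\|f_\mu\|_{T^1(G)} \leq 2$.

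It remains to exhibit $\nu$, which I would do component by component in $F$. On each finite tree, pick a vertex uniformly at random to serve as root and orient every edge toward it, so that out-degree is exactly $1$ at every non-root vertex. On each locally finite infinite tree, pick an end $\xi$ and orient every edge in the direction of $\xi$, so that out-degree is exactly $1$ at every vertex. The delicate point, which I expect to be the main obstacle, is the $G$-equivariant measurable selection of an end in those infinite components that admit more than one end. I would handle this by first enlarging the probability space with an i.i.d.\ family of $[0,1]$-valued labels on $G$; these auxiliary data allow a canonical Borel choice of an end (say, via harmonic measure based at a vertex distinguished by the labels), and the resulting joint distribution remains $G$-invariant and projects onto $\mu$ as required.
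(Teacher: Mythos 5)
Your $\ell^2$ estimate is exactly the paper's argument (Cauchy--Schwarz against the indicator of the support of $f_\mu$) and is fine. The $T^1$ part has the right skeleton --- orient each forest so that every vertex has out-degree at most one, and read off $f^\pm$ as edge-orientation probabilities --- but the construction of the orientation is where your proposal has a genuine gap, and the gap is self-inflicted. The definition of $\|\cdot\|_{T^1(G)}$ only asks for \emph{some} pair of functions $f^\pm$ on $G\times G$ with $f^+(g,g')+f^-(g,g')=f_\mu(g\inv g')$; the individual functions $f^\pm$ are not required to depend only on $g\inv g'$, so there is no need for the lifted measure $\nu$ to be $G$-invariant. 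Indeed, even in your own argument the only properties you use are that $\nu$ pushes forward to $\mu$ and is supported on out-degree-$\le 1$ orientations; the identity $f^++f^-=f_\mu(g\inv g')$ then follows from the $G$-invariance of $\mu$ alone. Once equivariance is dropped, a deterministic Borel section $\orient:\F_G\to\F_G^+$ does the job: fix an enumeration $\{g_n\}$ of $G$ and orient every edge of each tree toward the first enumerated vertex lying in that tree. This is the paper's construction, it handles finite and infinite components uniformly, and it makes the ``delicate point'' you identify disappear.

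By contrast, the equivariant construction you sketch does not go through as stated. For infinite components you propose to select an end via harmonic measure based at a vertex ``distinguished by the labels''. First, on an infinite component i.i.d.\ uniform labels a.s.\ admit no maximising or minimising vertex, so it is not clear what vertex is being distinguished. Second, and more seriously, harmonic measure does not select an end on recurrent trees: a bi-infinite path is a perfectly possible component (take for $\mu$ the deterministic forest generated by a single infinite-order element, or the free uniform spanning forest of $\ZZ$), and there simple random walk does not converge to an end. One can probably repair all of this with enough extra randomness (flip coins between the two ends of two-ended components, etc.), but as written the step ``exhibit $\nu$'' --- which is the entire content of the $T^1$ bound --- is not established. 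I would either carry out that construction in full or, better, observe that invariance is not needed and use the deterministic section above.
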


This proposition is a concrete way to carry over to random forests the geometric aspects of a construction for free groups from~\cite{Bozejko-Fendler91}, in accordance with the ideas exposed in the Introduction.

\begin{proof}[Proof of Proposition~\ref{prop:estimates}]
The second inequality is a straightforward application of the Cauchy--Schwarz inequality: setting $S=\{g : f_\mu(g)>0\}$, we have $|S|=\width(\mu)$ and hence
$$\deg(\mu)\ =\ \sum_{g\in G} f_\mu(g)\ =\ \sum_{g\in G} f_\mu(g)\cdot 1_S(g)\ \leq\ \sqrt{\width(\mu)} \,\|f_\mu\|_{\ell^2(G)},$$
as claimed.

\smallskip

We now focus on the first inequality.
Let $\{g_n\}_{n\in \NN}$ be an enumeration of the group $G$. We define a Borel section $\orient:\F_G\to \F_G^+$ as follows. For a forest $F$ and $(g, g')\in F$, let $n$ be the first integer such that $g_n$ belongs to the tree containing $(g, g')$. We then declare that $(g, g')$ belongs to $\orient(F)$ if $g'$ lies between $g$ and $g_n$ in that tree; otherwise, $(g', g)\in \orient(F)$.

We now define two functions $f_\mu^\pm$ on $G\times G$ by
$$f_\mu^+ (g,g') = \mu \big\{ F\in \F_G : (g,g')\in \orient(F) \big\}, \kern5mm f_\mu^-(g,g') = f_\mu^+(g',g).$$
The sum $f_\mu^+ (g,g') + f_\mu^-(g,g')$ is $\mu \big\{ F\in \F_G : (g,g')\in F \big\}$ by the definition of an orientation. Since $\mu$ is $G$-invariant, this quantity depends only on $g\inv g'$ and thus coincides with $f_\mu(g\inv g')$. Therefore, in view of the definition of $T^1(G)$, it remains to justify
$$\sup_{g\in G}\sum_{g'\in G} f_\mu^+ (g,g') \leq 1.$$
Fix thus any $g\in G$. Given a forest $F$, there is at most one $g'\in G$ such that $(g,g')\in\orient(F)$. Indeed, the integer $n$ introduced in the definition of $\orient$ is uniquely determined by $g$ and thus $g'$ can only be the first step towards $g_n$ from $g$, unless $g=g_n$ in which case there is no such $g'$. Therefore $\sum_{g'} f_\mu^+ (g,g')$ is a sum of measures of disjoint subsets of $\F_G$ and hence is bounded by $\mu(\F_G)=1$.
\end{proof}

The space $T^1(G)$ is directly related to uniformly bounded representations:

\begin{prop}\label{prop:inclusion}
If $G$ is unitarisable, then there is a constant $K$ such that
$$\|\cdot\|_{\ell^2(H)}\ \leq\ K\, \|\cdot\|_{T^1(H)}$$
holds for all subgroups $H<G$.
\end{prop}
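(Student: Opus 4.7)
The plan is to encode $f$ in a uniformly bounded representation $\pi$ of $G$ of upper-triangular form with diagonal $\lambda \oplus \lambda$, use unitarisability of $G$ to trivialise the off-diagonal derivation by an $\ell^2(G)$-bounded operator, and then extract the $\ell^2$-estimate from an explicit kernel analysis. First, reduce to $H=G$: given a near-optimal decomposition $f(h^{-1}h')=f^+(h,h')+f^-(h,h')$ and a set $X$ of representatives for the left cosets of $H$, extend $f$ by zero on $G\setminus H$ and set $\tilde f^\pm(g,g'):=f^\pm(x^{-1}g,x^{-1}g')$ when $g,g'$ lie in a common coset $xH$ with $x\in X$, and zero otherwise. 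Routine verification gives $\|\tilde f\|_{T^1(G)}\leq\|f\|_{T^1(H)}$ and $\|\tilde f\|_{\ell^2(G)}=\|f\|_{\ell^2(H)}$.

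Assume now $\|f\|_{T^1(G)}\leq 1$. Define formal kernel operators $A,B$ on $\ell^2(G)$ with $A(k,k')=f^+(k,k')$ and $B(k,k')=f^-(k,k')$; neither need be bounded, but $A+B$ is formally right convolution by $\check f$, hence $[\lambda(g),A+B]=0$ and $D_g:=[\lambda(g),A]=-[\lambda(g),B]$. Writing each matrix entry of $D_g$ first via $f^+$ (for row sums) and then via $f^-$ (for column sums) and applying Schur's test yields $\|D_g\|_{B(\ell^2(G))}\leq 2$ uniformly in $g$. Direct verification of the derivation identity $D_{gh}=D_g\lambda(h)+\lambda(g)D_h$ shows that $\pi(g):=\bigl(\begin{smallmatrix}\lambda(g)&D_g\\0&\lambda(g)\end{smallmatrix}\bigr)$ is a uniformly bounded representation of $G$ on $\ell^2(G)\oplus\ell^2(G)$ of norm at most $3$, exhibiting an extension of $\lambda$ by $\lambda$ with $\pi$-invariant subspace $\ell^2(G)\oplus 0$.

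By unitarisability, $\pi$ is similar to a unitary representation $\sigma$ with $\|T\|\|T^{-1}\|\leq K=K_G$ depending only on $G$. Since any unitary extension of $\lambda$ by $\lambda$ splits orthogonally, $\sigma\cong\lambda\oplus\lambda$; choosing the unitary identification so that the $\pi$-invariant subspace is carried to the first summand produces an invertible block upper-triangular similarity $S:V_\pi\to\ell^2(G)\oplus\ell^2(G)$ with $S\pi S^{-1}=\lambda\oplus\lambda$ and $\|S\|\|S^{-1}\|\leq K$. Its diagonal blocks $S_{11},S_{22}\in\lambda(G)'$ are invertible; reading off the $(1,2)$-entry of $S\pi(g)=(\lambda(g)\oplus\lambda(g))S$ yields $S_{11}D_g=[\lambda(g),S_{12}]$, i.e.\ $D_g=[\lambda(g),M]$ with $M:=S_{11}^{-1}S_{12}$ and $\|M\|\leq K$. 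The symmetric argument applied to $-B$ produces $M'\in B(\ell^2(G))$ with $-[\lambda(g),B]=[\lambda(g),M']$ and $\|M'\|\leq K$.

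Finally, $[\lambda(g),A-M]=0$ forces $A-M\in\lambda(G)'$, whose matrix therefore has the form $\phi(k^{-1}k')$ for $\phi(g)=f^+(e,g)-M(e,g)$; the row-sum bound gives $\|f^+(e,\cdot)\|_{\ell^2}\leq\|f^+(e,\cdot)\|_{\ell^1}\leq 1$, while $\|M(e,\cdot)\|_{\ell^2}=\|M^*\delta_e\|_{\ell^2}\leq\|M\|\leq K$, so $\|\phi\|_{\ell^2}\leq 1+K$. Symmetrically, $B+M'\in\lambda(G)'$ has matrix $-\psi(k^{-1}k')$ with $\|\psi\|_{\ell^2}\leq 1+K$, using column-sum bounds on $f^-$. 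Adding, the identity $f(k^{-1}k')=(A+B)(k,k')=(M-M')(k,k')+(\phi-\psi)(k^{-1}k')$ forces $M-M'\in\lambda(G)'$ with matrix $\theta(k^{-1}k')$ and $\|\theta\|_{\ell^2}\leq 2K$, so $f=\theta+\phi-\psi$ on $G$ and $\|f\|_{\ell^2(G)}\leq 2+4K$. The main obstacle is the rigidification of the bounded derivation $D_g$ into an inner derivation with controlled norm, which is precisely the Pisier-type consequence of unitarisability applied to the particular upper-triangular extension $\pi$.
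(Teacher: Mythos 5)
Your proof is correct, and it is worth comparing with the paper's, which is essentially a citation of Bo\.zejko--Fendler with a sketch. The overall architecture coincides: you use the same isometric coset-block extension $T^1(H)\to T^1(G)$ to reduce to $H=G$, and the same uniformly bounded representation $\pi=\bigl(\begin{smallmatrix}\lambda&D\\0&\lambda\end{smallmatrix}\bigr)$ obtained by twisting $\lambda\oplus\lambda$ with the derivation $D_g=[\lambda(g),A]$ (your Schur-test bound $\|D_g\|\le 2$, using $f^+$ for rows and $f^-$ for columns, is exactly right). Where you genuinely diverge is in extracting the $\ell^2$-conclusion: the paper follows the route $T^1(G)\se B(G)$ followed by a cotype argument, whereas you use the full strength of the similarity to split the unitarised extension orthogonally, deduce that the derivation is \emph{inner} with implementing operator $M=S_{11}^{-1}S_{12}$ of norm $\le\|S\|\|S^{-1}\|$, and then read off $f\in\ell^2$ from the fact that $A-M$ and $B+M$ lie (entrywise) in the commutant of $\lambda(G)$, so that their kernels are functions of $k^{-1}k'$ whose defining row (resp.\ column) is an $\ell^1$-row of $f^+$ (resp.\ $\ell^1$-column of $f^-$) perturbed by an $\ell^2$-row (resp.\ column) of a bounded operator. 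This is self-contained and avoids the cotype machinery entirely; it buys an explicit constant ($2+2K$ suffices, since you may take $M'=M$ and drop the $\theta$ term). Two small points deserve attention: the existence of a uniform similarity constant $K_G(c)$ for all representations with $\sup_g\|\pi(g)\|\le c$ is a standard but non-trivial consequence of unitarisability (proved by an $\ell^2$-direct-sum argument; see Pisier's book) and should be cited --- alternatively you could, as the paper does, first establish the set-theoretic inclusion $T^1(G)\se\ell^2(G)$ and then obtain the constant from the closed graph theorem; and the invertibility of $S_{11}$ should be justified by noting that $S$ carries $\ell^2(G)\oplus 0$ \emph{onto} itself (because the unitary identification was chosen to carry the invariant subspace $W=T(\ell^2(G)\oplus 0)$ onto the first summand), so that $S_{11}$ is a bounded bijection. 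With these two remarks supplied, your argument is complete.
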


Observe that the juxtaposition of Propositions~\ref{prop:inclusion} and~\ref{prop:estimates} establishes Theorem~\ref{thm:forests}.

\begin{proof}[Proof of Proposition~\ref{prop:inclusion}]
The fact that unitarisability implies $T^1(G)\se \ell^2(G)$ was established in~\cite[2.3(i)]{Bozejko-Fendler91}, see also Remark~2.8 in~\cite{PisierLNM}. We sketch the main idea for convenience. First, any $T^1$-function gives rise to a uniformly bounded representation on $\ell^2(G)\oplus \ell^2(G)$ by twisting the (diagonal) regular representation with the derivation given by the commutator between the regular representation and kernel operator defined by $f^+$ (using $f^-$ yields the same derivation up to a sign since $f^+ + f^-$ is $G$-invariant). If $G$ is unitarisable, this construction implies that $T^1(G)$ is contained in the space $B(G)$ of matrix coefficients of unitary representations on $G$. Then the stronger conclusion $T^1(G)\se \ell^2(G)$ is obtained by a cotype argument.

Next, we claim that this inclusion is continuous. This follows from the closed graph theorem; indeed, the diagonal in $T^1(G)\times \ell^2(G)$ is closed since it is closed for the weaker topology of pointwise convergence (the latter being Hausdorff).

To conclude the proof, it suffices to show that for all subgroups $H<G$ the canonical inclusion map $\CC[H]\to \CC[G]$ extends to an isometric map $T^1(H)\to T^1(G)$ since the analogous statement for $\ell^2(H)\to \ell^2(G)$ is obvious. Following~\cite[2.7(ii)]{PisierLNM}, we choose a set $R\se G$ of representatives for $G/H$; we arrange that $R$ contains the identity. Given $f\in T^1(H)$, we still write $f:G\to \CC$ for the function extended by zero outside $H$. Let $f^\pm$ be any pair of functions $H\times H\to\CC$ as required by the definition of $T^1(H)$. We now extend the definition of $f^\pm$ to functions $G\times G\to \CC$ by setting
$$f^\pm(g, g')\ =\ 
\begin{cases}
f^\pm(h, h') & \text{if $g=rh, g'=rh'$ for $r\in R$ and $h, h'\in H$,}\\
0 & \text{otherwise.}
\end{cases}$$
The definition is well-posed since $R$ maps injectively to $G/H$. This construction witnesses that $f\in T^1(G)$ with $T^1(G)$-norm bounded by $\|f\|_{T^1(H)}$; the reverse inequality is immediate.
\end{proof}

%===================================================================================================
%===================================================================================================
\section{First \texorpdfstring{$L^2$}{L2}-Betti number}\label{sec:betti}
\begin{flushright}
\begin{minipage}[t]{0.63\linewidth}\itshape\small
To achieve this wonder, electricity is the one and only means. Inestimable good has already been done by the use of this all powerful agent, the nature of which is still a mystery.\footnotemark
\end{minipage}
\footnotetext{N.~Tesla, \emph{The transmission of electrical energy without wires as a means for furthering Peace}, 1905.}
\end{flushright}

\medskip

In 1847, G.~Kirchhoff~\cite{Kirchhoff} proved that given a unit electric current between the endpoints of an edge~$e$ in a finite graph, the current flowing through~$e$ equals the (counting) probability that~$e$ belongs to the uniform spanning tree as introduced in Example~\ref{ex:UST}. There is a well-known connection between currents and combinatorially harmonic functions: see H.~Weyl~\cite{Weyl23} or B.~Eckmann~\cite{Eckmann45VD} and~\cite{Eckmann45} pp.~247--248. This is the starting point for the relation between random forests and the first $L^2$-Betti number that emerged from the work of R.~Pemantle~\cite{Pemantle91}, D.~Gaboriau~\cite{Gaboriau05} and R.~Lyons, exposed in~\cite{Lyons-PeresBOOK}. We shall present just what we need in our setting and refer to~\cite{Lyons-PeresBOOK} and~\cite{Benjamini-Lyons-Peres-Schramm01} for much more material.

\medskip

Let $H$ be a countable group and $S\se H$ some finite subset. Consider the graph $\goth=(H, E)$ obtained by assigning a geometric edge (\emph{i.e.} two opposed elements of $E$) between $h, h'\in H$ whenever $h\inv h'$ is in $S\cup S\inv$. Recall that when $S$ generates $H$, the graph $\goth$ is called a \emf{Cayley graph} for $H$. The left $H$-action preserves the graph structure and we shall investigate random forests arising as subgraphs of $\goth$. Given an enumeration of $H$, let $\goth_n$ be the subgraph of $\goth$ spanned by the first~$n$ elements in $H$. R.~Pemantle~\cite{Pemantle91} showed that the uniform spanning tree measure on $\goth_n$ converges weakly to a measure on $\F_H$. Indeed, it suffices essentially to prove that the probability of the elementary event that a given edge $e$ belongs to a tree in $\goth_n$ (with $n$ large enough to ensure $e\in\goth_n$) is non-increasing in $n$. In view of Kirchhoff's result, this monotonicity follows from Rayleigh's principle stating that added edges can only reduce the current through a given edge. The resulting measure on the space of subgraphs is supported on $\F_G$ since the latter is closed; it is called the \emf{free uniform spanning forest}. (Notice that finite trees can and generally do get disconnected in the limit.) The monotonicity implies in particular that the limit measure does not depend on the enumeration and hence is group-invariant. Much information about this measure can be found in~\cite{Pemantle91, Benjamini-Lyons-Peres-Schramm01, Lyons-PeresBOOK}.

\smallskip

Let $\lalt(\goth)$ be the space of $L^2$-functions on $E$ that change sign under the involution $e\mapsto \bar e$ (\emph{i.e.} ``$1$-forms''). Define the elementary edge function $\chi_e:=\delta_e - \delta_{\bar e}$, where $\delta$ is the Dirac mass. Denote by $d:\ell^2(H)\to\lalt(E)$ the combinatorial derivative (coboundary) defined by $df(e)=f(e_+) - f(e_-)$ and by $d^*$ its adjoint. Let $\lstar(\goth) \se \lalt(\goth)$ be the closure of $d\ell^2(H)$ and $\lcirc(\goth)\se \lalt(\goth)$ the closed span of all cycles (\emph{i.e.} sums $\sum_i\chi_{e_i}$ for sequences $\{e_i\}$ forming cycles). We make the corresponding definitions for the graphs $\goth_n$. The latter being finite, linear algebra provides the orthogonal decomposition $\lalt(\goth_n) = \lstar(\goth_n) \oplus\lcirc(\goth_n)$. The failure of this relation for a general infinite graph $\goth$ is crucial below. Equally important is the fact that whilst $\lalt(\goth_n)$ and $\lcirc(\goth_n)$ clearly densely exhaust $\lalt(\goth)$ and $\lcirc(\goth)$ as $n\to\infty$, the corresponding circumstance does not hold for $\lstar$. (This is the key difference between the present model of \emph{free} random forests and the so-called \emph{wired} case where the finite approximations $\goth_n$ are defined differently.)

\smallskip

We record the following result stated (with all necessary indications for the proof) in the current version of Chapter~10 of the book in progress~\cite{Lyons-PeresBOOK}.

\begin{prop}\label{prop:FUSF}
If $S$ generates $H$, then the expected degree of the free uniform spanning forest is at least $2 \betti(H)$.
\end{prop}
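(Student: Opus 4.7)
The plan is to express the expected degree of the free uniform spanning forest as a von Neumann dimension, then to compare this dimension with $\betti(H)$ through the cellular $L^2$-chain complex of a presentation 2-complex for $H$.

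\emph{Step 1: expected degree as a dimension.} Kirchhoff's theorem on the finite subgraph $\goth_n$ yields
$$\mu_n\{T : e \in T\} \;=\; \tfrac{1}{2}\langle P_n\chi_e, \chi_e\rangle,$$
where $P_n$ projects $\lalt(\goth_n)$ onto $\lstar(\goth_n) = \lcirc(\goth_n)^\perp$. Decomposing $\chi_e = P_n\chi_e + Q_n\chi_e$ with $Q_n$ the projection onto $\lcirc(\goth_n)$, and using that $\lcirc(\goth_n)$ is an increasing family whose union is dense in $\lcirc(\goth)$ (every cycle lies in some $\goth_n$), one has $Q_n \to Q^{\circ} := P_{\lcirc(\goth)}$ strongly and hence $P_n\chi_e \to P^{\circ}\chi_e$ with $P^{\circ} := I - Q^{\circ}$ the projection onto $\lcirc(\goth)^\perp$ in $\lalt(\goth)$. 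Combined with the weak convergence $\mu_n \to \mu$, this yields
$$f_\mu(s) \;=\; \tfrac{1}{2}\langle P^{\circ}\chi_{(1,s)},\chi_{(1,s)}\rangle \qquad (s\in S\cup S\inv).$$
The family $\{\chi_{(1,s)}/\sqrt{2} : s\in S\}$ is an $L(H)$-orthonormal basis of $\lalt(\goth)$, and $H$-invariance of $\mu$ gives $f_\mu(s) = f_\mu(s\inv)$; summing then produces
$$\deg(\mu) \;=\; \sum_{s\in S\cup S\inv} f_\mu(s) \;=\; 2\,\mathrm{tr}_{L(H)}(P^{\circ}) \;=\; 2\dim_{L(H)}\bigl(\lalt(\goth) \ominus \lcirc(\goth)\bigr).$$

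\emph{Step 2: relation to $\betti(H)$.} Choose a presentation $H = \langle S \mid R\rangle$ and consider the cellular $L^2$-chain complex of the universal cover of the presentation 2-complex:
$$\ell^2(H)^{|R|} \xrightarrow{\partial_2} \lalt(\goth) \xrightarrow{\partial_1} \ell^2(H).$$
Boundaries of 2-cells are cycles, so $\overline{\mathrm{im}\,\partial_2} \se \lcirc(\goth)$; conversely, every closed loop in $\goth$ corresponds to a word in the free group on $S$ representing the identity of $H$, hence to a product of conjugates of relators, which expresses the 1-chain of the loop as a finite $\ZZ[H]$-combination of $H$-translates of relator cycles, giving $\lcirc(\goth) \se \overline{\mathrm{im}\,\partial_2}$. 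Thus $\overline{\mathrm{im}\,\partial_2} = \lcirc(\goth)$. For infinite $H$, the map $\partial_1^* = d$ is injective on $\ell^2(H)$, so $\dim_{L(H)}\ker\partial_1 = |S|-1$; since adding cells of dimension $\geq 3$ to obtain a contractible model of $EH$ affects neither $\partial_1$ nor $\partial_2$,
$$\betti(H) \;=\; \dim_{L(H)}\ker\partial_1 - \dim_{L(H)}\overline{\mathrm{im}\,\partial_2} \;=\; (|S|-1) - \dim_{L(H)}\lcirc(\goth).$$
Combining with Step~1, $\deg(\mu) = 2\bigl(\betti(H)+1\bigr) \geq 2\betti(H)$.

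\emph{Main obstacle.} The key geometric input is the strong-convergence argument in Step~1: one must identify the limit projection as $P^{\circ}$ onto $\lcirc(\goth)^\perp$ rather than onto $\lstar(\goth)$, and this is only available because $\lcirc(\goth)$ --- unlike $\lstar(\goth)$ --- is densely exhausted by its finite-graph analogues (the "free versus wired" dichotomy highlighted just before the statement). After that, the remainder is routine bookkeeping with von Neumann dimensions.
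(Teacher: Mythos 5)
Your proof is correct, and its first half coincides with the paper's: both use Kirchhoff's theorem to write $i_n(e)=\pi_{\lstar(\goth_n)}(\chi_e)(e)$ and then identify the limit with $\pi_{\lcirc(\goth)^\perp}(\chi_e)(e)$ by exploiting the fact that $\lcirc$, unlike $\lstar$, is densely exhausted by its finite-graph analogues. You diverge in the second half. The paper decomposes $\lcirc(\goth)^\perp=\sD_\goth\oplus\lstar(\goth)$, identifies the $\sD_\goth$-contribution with $2\betti(H)$ via the Hodge--de~Rham description of reduced $L^2$-cohomology by differentials of harmonic Dirichlet functions, and then simply discards the $\lstar(\goth)$-term after a short pointwise estimate showing it is non-negative at each edge. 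You instead compute $\dim_{L(H)}\bigl(\lalt(\goth)\ominus\lcirc(\goth)\bigr)$ outright, by identifying $\lcirc(\goth)$ with $\overline{\mathrm{im}\,\partial_2}$ in the $L^2$-chain complex of a presentation $2$-complex and using $\dim\ker\partial_1=|S|-1$; this delivers the exact value $2\betti(H)+2$ in one stroke, whereas the paper only records the exact value in Remark~\ref{rem:ExactValue} via a separate identification of the discarded term with the expected degree of the \emph{wired} uniform spanning forest. Your route buys the sharp constant without any reference to harmonic functions or to the wired model, at the cost of invoking the (standard but not free) facts that every cycle of the Cayley graph is a $\ZZ[H]$-combination of translated relator boundaries and that $\betti(H)$ is computable from the $2$-skeleton of an $EH$; the paper's route keeps everything inside the Hilbert space $\lalt(\goth)$ and makes the probabilistic meaning of the remainder term transparent. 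Two small points to patch: the identity $\dim\ker\partial_1=|S|-1$ requires $H$ infinite (for finite $H$ the proposition is vacuous since $\betti(H)=0$), and the orthonormal-module-basis bookkeeping for $\lalt(\goth)\cong\ell^2(H)^{|S|}$ needs a word when some generator has order two.
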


In fact, the exact value $2 \betti(H)+2$ is given in~\cite{Lyons-PeresBOOK}, compare Remark~\ref{rem:ExactValue} below.

\begin{proof}[Proof Proposition~\ref{prop:FUSF}]
We can assume $S=S\inv$ and $1\notin S$ without affecting the statement, so that the neighbours of $1$ in $\goth=(H, E)$ are exactly $S$. Given an edge $e$ and $n$ large enough, denote by $i_n(e)$ the probability that $e$ (or rather the corresponding geometric edge) is in the uniform spanning tree of $\goth_n$. We need to prove
$$\sum_{s\in S} \lim_{n\to\infty} i_n(e_s) \ \geq\ 2 \betti(H), \kern5mm \text{wherein}\kern5mm e_s:=(s, 1).$$

By definition, the first $L^2$-Betti number $\betti(H)$ is the von Neumann dimension of the first $L^2$-cohomology of $H$. The dimension is not affected by passing to the Hausdorff quotient called the \emph{reduced} $L^2$-cohomology. The latter admits a Hodge--de~Rham decomposition which realises the first reduced $L^2$-cohomology of the finitely generated group $H$ as the space
$$\sD_\goth\ :=\ \Big(\lstar(\goth) \oplus\lcirc(\goth)\Big)^\perp\ \se\ \lalt(E)$$
of coboundaries of harmonic functions on vertices~\cite[\S1.1.4]{Lueck}. (In other words $\sD_\goth$ is the space of differentials of harmonic Dirichlet functions, which is isomorphic to the quotient of harmonic Dirichlet functions by the constants.) As for the von Neumann dimension, we recall that for a closed invariant subspace $W<\ell^2(H)$ it is given explicitly by $\pi_W(\delta_1)(1)$, where $\pi_W:\ell^2(H)\to W$ is the orthogonal projection. Combining this with the canonical isometric $H$-identification $\lalt(E) \cong \oplus_{s\in S} \ell^2(H \cdot e_s)$ determined by $\chi_{e_s}\mapsto2\delta_{e_s}$, one has
$$\betti(H)\ =\ \frac12 \sum_{s\in S} \pi_{\sD_\goth}(\chi_{e_s})(e_s),$$
where now $\pi_{\sD_\goth} : \ell^2(E)\to \sD_\goth$. A hurried reader may as well skip the above paragraph and take this identity as \emph{ad hoc} definition of $\betti$.

In view of Kirchhoff's laws, the current on $\goth_n$ yielding unit flow between the endpoints of an edge $e$ is $\pi_{\lstar(\goth_n)}(\chi_e)$ (see \emph{e.g.}~\cite{Eckmann45} p.~248). Therefore, Kirchhoff's characterisation~\cite{Kirchhoff} in terms of the uniform spanning tree shows $i_n(e) = \pi_{\lstar(\goth_n)}(\chi_e)(e)$. Recalling that $\lcirc$, but not $\lstar$, is compatible with the exhaustion, we obtain
\begin{multline*}
\sum_{s\in S} \lim_{n\to\infty} i_n(e_s)\ =\ \sum_{s\in S} \pi_{\lcirc(\goth)^\perp}(\chi_{e_s})(e_s)\ =\ \sum_{s\in S} \pi_{\sD_\goth\oplus\lstar(\goth)}(\chi_{e_s})(e_s)\\
=\ \sum_{s\in S} \pi_{\sD_{\goth}}(\chi_{e_s})(e_s) + \sum_{s\in S} \pi_{\lstar(\goth)}(\chi_{e_s})(e_s).
\end{multline*}
We know already that the first summand equals $2 \betti(H)$. In order to conclude the proof, it remains only to justify that the function $f:=\pi_{\lstar(\goth)}(\chi_{e_s})$ is non-negative at $e_s$. This is the case since (i)~$\chi_{e_s}(e_s)=1$, (ii)~$f$ and $\chi_{e_s}$ are alternating and (iii)~orthogonality imposes $\|f-\chi_{e_s}\|\leq \|\chi_{e_s}\|=2$.
\end{proof}

\begin{remark}\label{rem:ExactValue}
The expected degree is $2 \betti(H)+2$. Indeed, the second summand in the proof above is the expected degree of the \emph{wired} uniform spanning forest on $\goth$ for reasons entirely similar to the above, namely because the exhaustion defining this other model is compatible with~$\lstar$. On the other hand, it is shown in~\cite{Benjamini-Lyons-Peres-Schramm01} that this expected degree is two, using a different characterisation of the wired forest \emph{via} an algorithm of D.~Wilson~\cite{Wilson96}.
\end{remark}

We are now ready to complete the reduction of Theorem~\ref{thm:betti} to Theorem~\ref{thm:forests}. Recall that the \emf{rank} $\rank(H)$ is the minimal number of generators of a group $H$.

\begin{proof}[Proof of Theorem~\ref{thm:betti_bis}, first bound]
Let $H<G$ be a finitely generated subgroup with a generating set $S$ of size $\rank(H)$. The corresponding free uniform spanning forest $\mu$ on $H$ satisfies $\width(\mu)\leq \rank(H)$. Therefore, Proposition~\ref{prop:FUSF} shows that Theorem~\ref{thm:forests} yields the desired bound.
\end{proof}

\begin{proof}[Proof of Theorem~\ref{thm:betti}]
 Let $G$ be any residually finite group with $\betti(G)>0$. Since $G$ is the union of the directed set of all its finitely generated subgroups, Theorem~7.2(3) in~\cite{Lueck} provides us with a finitely generated subgroup $G_0<G$ with $\betti(G_0)>0$. Strictly speaking, one needs to express $G$ as a directed union of \emph{infinite} subgroups in order to apply \emph{loc. cit.}; this is not a restriction since if no such family existed, then $G$ would be amenable as directed union of finite groups, contradicting $\betti(G)>0$ (Theorem~0.2 in~\cite{Cheeger-Gromov}).

The group $G_0$ is still residually finite; we shall use the weaker property that $G_0$ admits finite index subgroups of arbitrarily large index. Notice that for all finite index subgroups $H<G_0$, the quantities $\rank(H)$ and $\betti(H)$ are finite. Moreover, denoting by $[G_0:H]$ the index, one has
$$\betti(H) = [G_0:H]\, \betti(G_0)  \kern5mm\text{and}\kern5mm \rank(H) \leq  [G_0:H]\, \rank(G_0).$$
The above equality is a basic property of $L^2$-Betti numbers~\cite[1.35(9)]{Lueck} whilst the inequality is a (non-optimal) consequence of the Reidemeister--Schreier algorithm, see \emph{e.g.} Proposition~4.1 of~\cite{Lyndon-Schupp} (in fact the quantity $\rank-1$ is sub-multiplicative).

Since $[G_0:H]$ is unbounded, the above (in)equalities violate the first bound of Theorem~\ref{thm:betti_bis}.
\end{proof}

%===================================================================================================
%===================================================================================================
\section{Cost}\label{sec:cost}
This section will be more concise since we shall deduce Theorem~\ref{thm:cost} from our Theorem~\ref{thm:forests} in very much the same way as we did above for Theorem~\ref{thm:betti}.

\medskip

The \emf{cost} $\cost(G)$ of a countable group $G$ is a numerical invariant extensively studied by D.~Gaboriau~\cite{Gaboriau00} (and suggested by G.~Levitt~\cite[p.~1174]{Levitt95}). It is defined as the infimum over all free probability-preserving $G$-actions and over all families of partial isomorphisms generating the resulting orbit equivalence relation of the sum of the measures of the domains of the partial isomorphisms.

We shall not use this definition, but rather the following alternative definition: The cost $\cost(G)$ is the infimum of half the expected degree over all connected random graphings of $G$. The equivalence of the definitions is proved \emph{e.g.} in Proposition~29.5 of~\cite{Kechris-Miller04} (where the definition of the degree differs by a factor~$2$).

\medskip

We now proceed to recall another family of models of random forests, namely the \emf{free minimal spanning forests}; first studied on $\ZZ^d$ in~\cite{Alexander-Molchanov, Alexander95}, it received a general treatment in~\cite{Lyons-Peres-Schramm06}. Let $H$ be a group generated by a finite set $S=S\inv\not\ni 1$ and let $\goth$ be the corresponding Cayley graph $\goth=(H, E)$ (as in Section~\ref{sec:betti}). The free minimal spanning forest associated to this choice $\goth$ is the random graphing of $H$ obtained by assining weights on the (geometric) edges of $\goth$ independently and deleting every edge that has maximal weight in some cycle. We shall need the following fact due to R.~Lyons.

\begin{prop}\label{prop:FMSF}
Let $\mu$ be the above random forest. Then $\deg(\mu)\geq 2 \cost(H)$.
\end{prop}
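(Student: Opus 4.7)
The plan is to exhibit the FMSF as a near-optimal competitor in the alternative definition of $\cost(H)$ recalled just before the statement, namely the infimum of half the expected degree over connected random graphings of $H$.

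First I would observe that $\mu$ is supported on forests all of whose trees are infinite (since $H$ is infinite and $\goth$ is connected), so $\mu$ is already a random graphing of $H$ with expected degree $\deg(\mu)$. If the FMSF were almost surely a single spanning tree, the definition of cost would immediately give $\cost(H) \leq \deg(\mu)/2$ and we would be done. In general the FMSF may have several infinite components, so my strategy is, for every $\epsilon > 0$, to produce a random graphing on $H$ that extends $\mu$, is connected almost surely, and has expected degree at most $\deg(\mu) + \epsilon$. This immediately yields $\cost(H) \leq (\deg(\mu) + \epsilon)/2$; letting $\epsilon \to 0$ delivers the stated bound. An equivalent way to package this in the Bernoulli framework is to realise the FMSF as a graphing of the orbit equivalence relation of the weight-Bernoulli action $H\acts[0,1]^{E_+(\goth)}$ and apply Gaboriau's treeing formula, which identifies the cost of the sub-relation generated by $\mu$ with $\deg(\mu)/2$; the enlargement above is then the upgrade from this sub-relation to the full orbit equivalence relation.

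The main obstacle is the construction of such a sparse $H$-equivariant connector. The tool would be the structural analysis of the free minimal spanning forest in~\cite{Lyons-Peres-Schramm06}: distinct infinite components of $F$ share many adjacent pairs of Cayley edges, and using an auxiliary layer of i.i.d.\ decorations on $E$ (independent of the FMSF weights, so as to preserve measurability and equivariance) one may single out a random subset of such inter-tree Cayley edges of arbitrarily small density that merges every pair of infinite components. Combined with $\mu$ this produces the required enlargement. The delicate point is choosing the auxiliary rule so that a density-$\epsilon$ selection really does bridge \emph{every} pair of neighbouring components; this is the step where I would rely most heavily on the invariant-percolation estimates of Lyons--Peres--Schramm and on the ergodicity of the underlying Bernoulli system.
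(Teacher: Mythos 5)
Your overall architecture is exactly the paper's: view the free minimal spanning forest as a candidate in the ``half the expected degree over connected random graphings'' characterisation of $\cost(H)$, enlarge it by an equivariant random set of edges of density $\epsilon$ so that the union is almost surely connected, conclude $\cost(H)\leq(\deg(\mu)+\epsilon)/2$, and let $\epsilon\to 0$. The reduction to the alternative definition of cost, the monotone bookkeeping of expected degrees, and the limiting argument are all fine.

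The gap is precisely the step you flag as ``delicate'': you never actually establish that a density-$\epsilon$ equivariant selection of inter-tree edges connects all components, and the mechanism you sketch (auxiliary i.i.d.\ decorations singling out Cayley edges between adjacent infinite trees, plus an appeal to ergodicity) is both more complicated than necessary and not obviously workable --- nothing you cite guarantees that \emph{every} pair of neighbouring components receives a bridge when the selection density is small, and a component-by-component argument runs into equivariance and measurability issues. The paper closes this step in one line by taking the connector to be ordinary Bernoulli $p$-percolation on $\goth$, independent of the forest, and invoking Theorem~1.3 of~\cite{Lyons-Peres-Schramm06}, which states that the union of the free minimal spanning forest with an independent Bernoulli $p$-percolation is almost surely connected for every $p>0$; the expected degree of the union is at most $\deg(\mu)+|S|\,p$, and $p\to 0$ finishes the proof. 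So your proposal is repaired not by a new idea but by replacing your bespoke connector with plain Bernoulli percolation and citing the correct result; as written, the argument is incomplete at its only substantive point.
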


\begin{proof}
For any $0\leq p \leq 1$, let $\mu_p$ be the random graphing obtained by adding to the $\mu$-random forest each edge of $E$ with probabiliy $p$ independently (thus $\mu_p$ is the union of $\mu$ and of the Bernoulli $p$-percolation random graphing on $\goth$). According to Theorem~1.3 in~\cite{Lyons-Peres-Schramm06}, $\mu_p$ is almost surely connected whenever $p>0$. However, we have by construction
$$\deg(\mu_p)\ \leq\ \deg(\mu) + |S|\cdot p.$$
Letting $p$ tend to zero, the statement follows from the characterisation of $\cost(H)$ recalled above.
\end{proof}

Now the reduction of Theorem~\ref{thm:cost} to Theorem~\ref{thm:forests} proceeds exactly along the lines of the arguments given for Theorem~\ref{thm:betti} in Section~\ref{sec:betti}: First, Proposition~\ref{prop:FMSF} applied to finitely generated subgroups $H$ of $G$ shows that Theorem~\ref{thm:forests} yields the second bound of Theorem~\ref{thm:betti_bis}. Then, one considers finite index subgroups $H<G$ of arbitrarily large index and argues as for Theorem~\ref{thm:betti}, using this time the relation
$$\cost(H)-1\ =\ [G:H]\,(\cost(G) -1),$$
which is Theorem~3 in~\cite{Gaboriau00}. There is no need here to choose a subgroup $G_0$ since $G$ was assumed finitely generated from the outset.

%===================================================================================================
%===================================================================================================
\section{Further considerations}\label{sec:further}
\subsection{}
We begin with a few remarks about the relation between Theorems~\ref{thm:betti} and~\ref{thm:cost}. For any infinite countable group $G$, one has $\cost(G)-1\geq\betti(G)$ (this follows from Corollaire~3.23 in~\cite{GaboriauL2}). A well-known question is whether equality holds. Thus, in the special case of finitely generated groups, Theorem~\ref{thm:cost} is \emph{a priori} stronger than Theorem~\ref{thm:betti}. For general countable groups, one would need the fact that a directed union of cost one groups still has cost one; this is not in the literature (a partial result is Lemme~VI.25 in~\cite{Gaboriau00}), though D.~Gaboriau has orally communicated us a proof. Moreover, the second bound of Theorem~\ref{thm:betti_bis} implies the first.

\smallskip

As for the two types of forests used on finitely generated groups in the reduction of these two theorems to Theorem~\ref{thm:forests}, it is a general fact that on the same Cayley graph, the free \emph{minimal} spanning forest has expected degree bounded below by its \emph{uniform} analogue, see Corollary~1.4 in~\cite{Lyons-Peres-Schramm06}.

\subsection{}\label{sec:WMSF}
As mentioned in the Introduction, \itshape any non-amenable finitely generated group $G$ admits a random forest of expected degree~$>2$\upshape. Indeed, let $S=S\inv\not\ni 1$ a finite generating set. For an integer $k$, consider the $k$th product graph $\goth^{[k]}$ associated to the Cayley graph $\goth$, recalling that it consists of the graph on $G$ where edges correspond to $k$-paths in $\goth$. Strictly speaking, it is a multi-graph, but any forest on $\goth^{[k]}$ can be considered as a forest on the Cayley graph associated to $S^k$. Using spectral isoperimetric estimates, it is proved in~\cite{Pak-Smirnova-Nagnibeda} that the Bernoulli percolation on $\goth^{[k]}$ satisfies $p_c<p_u$ when $k$ is large enough, where $p_c, p_u$ are respectively the critical probability and the uniqueness probability (see~\cite{Lyons-PeresBOOK} for more background). By Proposition~1.7 in~\cite{Lyons-Peres-Schramm06}, this implies that the free minimal spanning forest differs from its \emf{wired} analogue, which implies that the former has higher expected degree by Proposition~3.5 \emph{loc.\ cit}. We recall here that the wired minimal forest is defined exactly as in Section~\ref{sec:cost}, except that one deletes an edge if it has maximal weight even in a cycle ``through infinity'', which is just a bi-infinite path (our reference is still~\cite{Lyons-Peres-Schramm06}). Summing up, it remains only to prove that the expected degree of the wired minimal spanning forest is at least~$2$. In fact, it is exactly~$2$ in the Cayley graph case at hand, see Theorem~3.12 in~\cite{Lyons-Peres-Schramm06}. The above reasoning can be extracted from the arguments of~\cite{Gaboriau-Lyons}.

\subsection{}
It would be desirable to have examples of (residually finite) groups $G$ with $\betti(G)>0$ or $\cost(G)>1$ but not containing $F_2$. We would expect such examples to exist, be it only because the non-vanishing of $\betti$ is a measure-equivalence invariant by~\cite{GaboriauL2}, and $\cost>1$ is so by definition; it seems unlikely that the containment of $F_2$ should be preserved. Interestingly, it is established in~\cite{Peterson-Thom} that for a torsion-free group satisfying a weaker form of the Atiyah conjecture, $\betti>0$ implies the existence of a free subgroup $F_2$. In view of the measure-equivalence invariance of $\betti>0$, one can ask if this statement should be considered as evidence against the Atiyah conjecture. On the other hand, an indication of perhaps surprisingly strong restrictions given by additional algebraic assumptions is M.~Lackenby's result~\cite{Lackenby_detecting} that implies in particular that \emph{residually-$p$-finite finitely presented} groups with $\betti>0$ contain~$F_2$.

\subsection{}
Let $G$ be a group generated by a finite set $S$ and let $\goth$ be the corresponding Cayley graph. Theorem~\ref{thm:forests} is an incentive to find random forests in $\goth$ with large expected degree (compared to the size of $S$). One immediate restriction is given by the \emf{vertex isoperimetric constant} of $\goth$, namely the infimum $i_V(\goth)$ of the ratio $|\partial_V \gothh|/|\gothh|$, where $\gothh$ ranges over all finite subgraphs and $\partial_V$ denotes the vertex-boundary. Indeed, one verifies that the expected degree of any random forest on $\goth$ is bounded by $1+i_V(\goth)/2$. (For the edge-isoperimetric constant, this inequality occurs in~\cite{Lyons-Pichot-Vassout08}.) One can increase at will $i_V$ for any non-amenable graph by replacing $S$ with high powers of that set (as in~\cite{Pak-Smirnova-Nagnibeda}, see~\ref{sec:WMSF} above), but this procedure affects also the denominator in Theorem~\ref{thm:forests}.

\smallskip
Whilst an application of the Hall marriage lemma and of a Cantor--Bernstein argument shows that $\goth$ contains a forest of $n$-regular trees whenever $n\leq i_V(\goth)$, there is no indication that there should be a $G$-invariant measure on the space of such forests.

\subsection{}
Let $G$ be a finitely generated group. Rather than residual finiteness, the proof of Theorem~\ref{thm:betti} (and thus also of Theorem~\ref{thm:cost}) actually uses the existence of infinitely many finite quotients of $G$, or equivalently of some infinite sequence $\{H_n\}$ of finite index subgroups $H_n< G$ which we may assume nested. The Reidemeister--Schreier algorithm quoted earlier shows that the limit
$$\lim_{n\to\infty} \frac{\rank(H_n) - 1}{[G:H_n]}$$
exists; it was introduced in~\cite{Lackenby05} as the \emf{rank gradient}. The \emf{absolute rank gradient} of~\cite{Abert-Nikolov07HEEGARD} is the infimum of the above ratio over all finite index subgroups of $G$.

\smallskip

Does the existence of an infinite sequence with positive rank gradient imply that there are random forests $\mu$ on (subgroups of) $G$ with unbounded ratio $\deg(\mu)^2/\width(\mu)$~? Are there such forests at least when $G$ has positive \emph{absolute} rank gradient?

\smallskip

It follows from the definitions that both $\betti(G)$ and $\cost(G)$ are bounded by $\rank(G)$; therefore, the multiplicativity of $\betti$ and $\cost$ (as recalled in earlier sections) imply that both are lower bounds for the absolute rank gradient. The results of Ab\'ert--Nikolov~\cite{Abert-Nikolov07} suggest some similarity of the rank gradient with the behaviour of these invariants. Moreover, in~\cite{Abert-Nikolov07HEEGARD}, Ab\'ert--Nikolov express the rank gradient of certain chains $\{H_n\}$ as the cost of a specific $G$-action attached to the chain. This result gives added interest to the \emph{fixed price question} which asks whether all relations produced by a given countable group have same cost~\cite{Gaboriau00}.

%===================================================================================================
%\bibliographystyle{amsalpha}
%\bibliography{../BIB/ma_bib}
\def\cprime{$'$}
\providecommand{\bysame}{\leavevmode\hbox to3em{\hrulefill}\thinspace}
\providecommand{\MR}{\relax\ifhmode\unskip\space\fi MR }
% \MRhref is called by the amsart/book/proc definition of \MR.
\providecommand{\MRhref}[2]{%
  \href{http://www.ams.org/mathscinet-getitem?mr=#1}{#2}
}
\providecommand{\href}[2]{#2}

\end{document}